\newtheorem{theorem}{Theorem}[section]
\newtheorem{lemma}[theorem]{Lemma}
\newtheorem{corollary}[theorem]{Corollary}
\newtheorem{proposition}[theorem]{Proposition}
\theoremstyle{definition}
\newtheorem{definition}[theorem]{Definition}
\numberwithin{equation}{section}
\numberwithin{equation}{section}
\begin{document}

\arraycolsep=1pt

\title{\bf\Large
Attainability and criticality for multipolar Rellich inequality
\footnotetext {\hspace{-0.35cm}
2020 {\it Mathematics Subject Classification}. 26D10, 42B37, 35A23, 35B25.
\endgraf {\it Key words and phrases}.
Multipolar Rellich inequalities; sharp constant; attainability; criticality.
\endgraf {\it Funding information}: NNSF of China (12071431).
}}
\author{Yongyang Jin, Shoufeng Shen and Li Tang}
\date{}
\maketitle

\vspace{-0.6cm}

\begin{center}
\begin{minipage}{13.5cm}
{\small {\bf Abstract} \quad
In this paper we obtain optimal  multipolar Rellich inequality in $\mathbb{R}^{N}$ for biharmonic Schr\"{o}dinger operator with positive multisingular potentials of the form
\begin{equation*}
H_{n}:=\Delta^{2}-\frac{N^{2}(N-4)^{2}}{n^{4}}\sum_{1\leq i<j\leq n}\frac{\left|a_{i}-a_{j}\right|^{2}}{|x-a_{i}|^{2}|x-a_{j}|^{2}}
     \left(\sum_{1\leq k<l\leq n}\frac{\nu_{k,i,j}\nu_{l,i,j}\left|a_{k}-a_{l}\right|^{2}}{|x-a_{k}|^{2}|x-a_{l}|^{2}}\right),
\end{equation*}
where $a_{1},\cdots,a_{n}$ are $n$ different singular poles, $\nu_{k,i,j}=\frac{N+2n-4}{N}$ when $k=i$ or $k=j$, otherwise $\nu_{k,i,j}=\frac{N-4}{N}$. Moreover, we prove the attainability of the best constant $\frac{N^{2}(N-4)^{2}}{n^{4}}$ in $D^{2,2}(\mathbb{R}^{N})$ for $n\geq3$ and the criticality of the biharmonic Schr\"{o}dinger operator $H_{n}$.

.}
\end{minipage}
\end{center}

%
%
%
%
%

\section{Introduction}
The classical $L^2-$Hardy inequality
\begin{equation}\label{T-1-1}
\int_{\mathbb{R}^{N}}|\nabla u|^{2}dx\geq\frac{(N-2)^{2}}{4}\int_{\mathbb{R}^{N}}\frac{|u|^{2}}{|x|^{2}}dx
\end{equation}
holds for $u\in{C}^{\infty}_{0}(\mathbb{R}^{N})$ when $n\geq 3$. Hardy inequality are of fundamental importance in various branches of mathematics such as partial differential equation, spectral theory, analysis, mathematica physics, geometry and quantum mechanics, and have been comprehensively studied, just to name a few, see for example \cite{BE,BFT,BV,Da,Du,DP,JZ,L,T,VZ,YKG} and references therein.

At the 1954 ICM in Amsterdam \cite{R}, Rellich presented the following inequality as
the second order version of the Hardy inequality for $N\geq5$:
\begin{equation}\label{T-1-2}
\int_{\mathbb{R}^{N}}|\Delta u|^{2}dx\geq\frac{N^{2}(N-4)^{2}}{16}\int_{\mathbb{R}^{N}}\frac{|u|^{2}}{|x|^{2}}dx, \hspace{2mm} \forall u\in C_{0}^{\infty}(\mathbb{R}^{N}).
\end{equation}
The constant $\frac{N^{2}(N-4)^{2}}{16}$ is sharp but can not be attained   by nontrivial functions. Much effort has been
devoted to the study of Rellich type inequalities and their applications.  The reader may refer  \cite{BMO,DH,EE,EL,JS,MOW} and the references therein for further details.
\begin{definition}(\emph{Sharp/Optimal constant})
Denote the norm $\|\cdot\|_{\sigma,2}$, $\sigma=1,2$, as $\|u\|_{1,2}=\|\nabla u\|_{2}$ and $\|u\|_{2,2}=\|\Delta u\|_{2}$. Denote Schr\"{o}dinger/biharmonic Schr\"{o}dinger operator $H=\Delta^{\sigma}-\lambda^{*}V$. We say that the constant $\lambda^{*}$ of the following Hardy/Rellich type inequality associated with $H$
\begin{equation}\label{T-1-3}
\|u\|_{\sigma,2}^{2}\geq\lambda^{*}\int_{\mathbb{R}^{N}}V|u|^{2}dx
\end{equation}
is \emph{sharp} or \emph{optimal} if it is defined through the optimization problem
\begin{equation*}
\lambda^{*}:=\inf_{u\in D^{\sigma,2}(\mathbb{R}^{N})\setminus\{0\}}\frac{\|u\|_{\sigma,2}^{2}}{\int_{\mathbb{R}^{N}}V|u|^{2}dx},
\end{equation*}
where
\begin{equation*}
D^{\sigma,2}(\mathbb{R}^{N}):=\overline{C_{0}^{\infty}(\mathbb{R}^{N})}^{\|\cdot\|_{\sigma,2}}.
\end{equation*}
\end{definition}
It is well known that the  constants $\frac{(N-2)^{2}}{4}$ and $\frac{N^{2}(N-4)^{2}}{16}$ in inequality (\ref{T-1-1}) and (\ref{T-1-2}) are both sharp but not attained.

Recently, motivated by problems in quantum mechanics, many authors  studied multipolar Hardy inequalities and their applications to Schr\"{o}dinger operators with multipolar potentials. The readers may find some relevant results in \cite{BDE,CP,CPT,CZ,FFK,FMT,FT} and the reference therein. The two most studied potentials are
\begin{equation*}
 W_{1}(x)=\sum_{i=1}^{n}\frac{1}{|x-a_{i}|^2}\hspace{2mm}, \hspace{4mm}W_{2}(x)=\sum_{1\leq i<j\leq n}\frac{\left|a_{i}-a_{j}\right|^{2}}{|x-a_{i}|^{2}|x-a_{j}|^{2}}.
\end{equation*}
In order to obtain a lower bound of the spectrum of the Schr\"{o}dinger operators $-\Delta-\mu W_{1}$, $\mu\in(0,\frac{(N-2)^{2}}{4}]$, Bosi-Dolbeault-Esteban in \cite{BDE} proved for $u\in C_{0}^{\infty}(\mathbb{R}^{N})$ the following multipolar Hardy inequality
\begin{equation}\label{T-1-4}
\int_{\mathbb{R}^{N}}|\nabla u|^{2}dx\geq \frac{(N-2)^{2}}{4n}\int_{\mathbb{R}^{N}}W_{1}|u|^{2}dx
    +\frac{(N-2)^{2}}{4n^2}\int_{\mathbb{R}^{N}}W_{2}|u|^{2}dx.
\end{equation}
Several years later, Cazacu-Zuazua obtained in \cite{CZ} the following inequality
\begin{equation}\label{T-1-5}
\int_{\mathbb{R}^{N}}|\nabla u|^{2}dx\geq\frac{(N-2)^{2}}{n^2}\int_{\mathbb{R}^{N}}W_{2}|u|^{2}dx,\hspace{2mm}u\in C_{0}^{\infty}(\mathbb{R}^{N}),
\end{equation}
where the constant $\frac{(N-2)^{2}}{n^2}$ is sharp for $N\geq3$. The inequality (\ref{T-1-5}) is an improvement of (\ref{T-1-4}) in the sense that: the total mass near $a_{i}$ of the potential in (\ref{T-1-5}) is $\frac{(N-2)^{2}}{4}\frac{4n-4}{n^2}\frac{1}{|x-a_{i}|^2}$, which is strictly larger than the total mass $\frac{(N-2)^{2}}{4}\frac{2n-1}{n^2}\frac{1}{|x-a_{i}|^2}$ near $a_{i}$ in inequality (\ref{T-1-4}).  Another multipolar Hardy inequality proved in \cite{DFP} by Devyver-Fraas-Pinchover reads as
\begin{equation}\label{T-1-6}
\int_{\mathbb{R}^{N}}|\nabla u|^{2}dx\geq \left(\frac{N-2}{n+1}\right)^{2}\int_{\mathbb{R}^{N}}\left(W_{1}+W_{2}\right)|u|^{2}dx,\hspace{2mm}u\in C_{0}^{\infty}(\mathbb{R}^{N}),
\end{equation}
where the constant $\left(\frac{N-2}{n+1}\right)^{2}$ is sharp. Furthermore, they gave a stronger concept, say \emph{criticality} of Schr\"{o}dinger operator, to characterize the \emph{sharpness/optimality} of an inequality than the "sharp/optimal constant". We recall the original definition in \cite{DFP}.
\begin{definition}
Assume that $H\geq0$. The operator $H$ is said to be \emph{subcritical} in $\mathbb{R}^{N}$ if there exists a nonzero nonnegative continuous function $W$ such that
\begin{equation*}
\langle Hu,u\rangle\geq \int_{\mathbb{R}^{N}}W|u|^{2}dx,\hspace{2mm}u\in C_{0}^{\infty}(\mathbb{R}^{N}).
\end{equation*}
Otherwise, we say that $H$ is \emph{critical} in $\mathbb{R}^{N}$.
\end{definition}
Devyver-Fraas-Pinchover proved in \cite{DFP} that the Schr\"{o}dinger operators $-\Delta-\frac{(N-2)^{2}}{n^{2}}W_{2}$ and $-\Delta-\left(\frac{N-2}{n+1}\right)^{2}\left(W_{1}+W_{2}\right)$ are both critical, this implies that one cannot add any positive reminders to the right hand side of inequalities (\ref{T-1-5}) and (\ref{T-1-6}). With this in mind, one may want to know whether the following inequality (\ref{T-1-7}) is critical, since these three inequalities  are intrinsically related. In fact, assume $2-N\leq s\leq \frac{2-N}{2}$, we have (from Lemma \ref{lemma2.2} and identity (\ref{T-2-1})), for any $u\in C_{0}^{\infty}(\mathbb{R}^{N})$, the following inequality
\begin{equation}\label{T-1-7}
\begin{split}
\int_{\mathbb{R}^{N}}|\nabla u|^{2}dx\geq & [s(N-2)-s^{2}]\sum_{i=1}^{n}\int_{\mathbb{R}^{N}}\alpha_{i}\frac{|u|^{2}}{|x-a_{i}|^2}dx \\
     & +s^{2}\sum_{1\leq i<j\leq n}^{n}\int_{\mathbb{R}^{N}}\alpha_{i}\alpha_{j}\frac{\left|a_{i}-a_{j}\right|^{2}}{|x-a_{i}|^{2}|x-a_{j}|^{2}}|u|^{2}dx
\end{split}
\end{equation}
holds, where $\alpha_{i}\geq 0$, $i=1,\ldots,n$, $\sum_{i=1}^{n}\alpha_{i}=1$. One can also prove that, by the method introduced in \cite{DFP} of  constructing the "optimal Hardy weight", the Schr\"{o}dinger operator
\begin{equation*}
-\Delta-[s(N-2)-s^{2}]\sum_{i=1}^{n}\frac{\alpha_{i}}{|x-a_{i}|^2}dx
 -s^{2}\sum_{1\leq i<j\leq n}^{n}\frac{\alpha_{i}\alpha_{j}\left|a_{i}-a_{j}\right|^{2}}{|x-a_{i}|^{2}|x-a_{j}|^{2}}
\end{equation*}
is also critical when $0<\alpha_{i}\leq\frac{1}{2}$, $i=1,\ldots,n$. Let $\alpha_{i}=\frac{1}{n}$ for $i=1,\ldots,n$, the inequalities (\ref{T-1-4}), (\ref{T-1-5}) and (\ref{T-1-6}) are all special cases of inequality (\ref{T-1-7}) if we let $s=\frac{2-N}{2}$, $2-N$ and $\frac{n(2-N)}{n+1}$ respectively.

In this paper we always assume $n\geq2$. We will work in the function space $D^{2,2}(\mathbb{R}^{N})$. The first goal of the present article is to obtain the optimal $L^{2}$-version multipolar Rellich inequality. To our shallow knowledge, there has been no result on multipolar Rellich inequality in the existing literatures. A universal approach to obtain Hardy and Rellich inequalities is the so-called method of super-solutions. However, it is not an easy work to compute the fourth-order derivative of a ground state function in the case of Rellich inequality. In this paper, we take an subtle iterative method to solve this  difficulty. The main result of the paper is as follows.
\begin{theorem}\label{thm1.3}
Assume $N\geq 5$, $a_{1},\cdots,a_{n}\in\mathbb{R}^{N}$. Then the following inequality holds for any $u\in C_{0}^{\infty}(\mathbb{R}^{N})$,
\begin{equation}\label{T-1-8}
\int_{\mathbb{R}^{N}}|\Delta u|^{2}dx \geq \frac{N^{2}(N-4)^{2}}{n^{4}}\int_{\mathbb{R}^{N}}V_{n}(x)|u|^{2}dx,
\end{equation}
where
\begin{equation*}
V_{n}(x):=\sum_{1\leq i<j\leq n}\frac{\left|a_{i}-a_{j}\right|^{2}}{|x-a_{i}|^{2}|x-a_{j}|^{2}}
     \left(\sum_{1\leq k<l\leq n}\frac{\nu_{k,i,j}\nu_{l,i,j}\left|a_{k}-a_{l}\right|^{2}}{|x-a_{k}|^{2}|x-a_{l}|^{2}}\right),
\end{equation*}
with
\begin{equation*}
\nu_{k,i,j}=\left\{\begin{array}{ll}
\frac{N+2n-4}{N} &, k=i\hspace{2mm}or\hspace{2mm} k=j;\\
\frac{N-4}{N}    &, otherwise.\\
\end{array}\right.
\end{equation*}
The constant $\frac{N^{2}(N-4)^{2}}{n^{4}}$ is sharp. Furthermore, the constant $\frac{N^{2}(N-4)^{2}}{n^{4}}$ is achieved in the space $D^{2,2}(\mathbb{R}^{N})$ when $n\geq3$ by the minimizers (up to a constant)
\begin{equation*}
\phi(x)=\prod\limits_{i=1}^{n}|x-a_{i}|^{\frac{4-N}{n}},
\end{equation*}
while it can not be attained in the case $n=2$.
\end{theorem}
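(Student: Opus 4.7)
The plan is to employ a ground-state substitution together with an iterative application of the multipolar Hardy inequality (\ref{T-1-7}) proved earlier, so as to avoid computing $\Delta^{2}\phi/\phi$ directly (this being the difficulty flagged by the authors in the introduction). The natural candidate for the ground state is exactly the minimizer $\phi(x)=\prod_{i=1}^{n}|x-a_{i}|^{(4-N)/n}$, which is a product of the single-pole Rellich minimizers.

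First I would compute the lower-order quantities. Writing $\alpha=(4-N)/n$ and $\phi=\exp(\alpha\sum_{i}\log|x-a_{i}|)$, one obtains
$$\frac{\nabla\phi}{\phi}=\alpha\sum_{i=1}^{n}\frac{x-a_{i}}{|x-a_{i}|^{2}},\qquad \frac{\Delta\phi}{\phi}=\alpha^{2}\Bigl|\sum_{i=1}^{n}\frac{x-a_{i}}{|x-a_{i}|^{2}}\Bigr|^{2}+\alpha(N-2)W_{1}(x).$$
The key algebraic identity $2(x-a_{i})\cdot(x-a_{j})=|x-a_{i}|^{2}+|x-a_{j}|^{2}-|a_{i}-a_{j}|^{2}$ collapses the squared sum to $nW_{1}-W_{2}$, yielding $\Delta\phi/\phi=\frac{2(4-N)}{n}W_{1}-\frac{(4-N)^{2}}{n^{2}}W_{2}$. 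Next I would set $u=\phi v$ and expand $\Delta u=\phi\Delta v+2\nabla\phi\cdot\nabla v+v\Delta\phi$, square, and integrate by parts. The diagonal term $\int v^{2}(\Delta\phi)^{2}dx$ produces the leading contribution of the correct order in $W_{2}^{2}$; the cross terms give rise to integrals of $|\nabla v|^{2}$ and $\phi\Delta v\,(\nabla\phi\cdot\nabla v)$ against weights built from $\nabla\phi/\phi$. Rather than integrating once more to reduce $|\nabla v|^{2}$ to $v^{2}$ — which would call for $\Delta^{2}\phi/\phi$ — I would apply the weighted multipolar Hardy inequality (\ref{T-1-7}) as the iterative step, replacing each $\int\phi^{2}|\nabla v|^{2}$-type integral by the corresponding $v^{2}$-integral against the desired singular weight. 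After matching coefficients, the mixed products involving $W_{1}W_{2}$ and $W_{2}^{2}$ combine into the single expression $V_{n}$, and the asymmetry between $\nu_{k,i,j}=(N+2n-4)/N$ when $k\in\{i,j\}$ and $\nu_{k,i,j}=(N-4)/N$ otherwise encodes precisely the extra weight carried by the two indices singled out by the outer product in $V_{n}$.

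Sharpness of the constant $\frac{N^{2}(N-4)^{2}}{n^{4}}$ follows by testing with $u_{\epsilon}=\phi\chi_{\epsilon}$, where $\chi_{\epsilon}$ is a smooth cutoff supported away from the poles and from infinity; as $\epsilon\to 0$ the ratio in (\ref{T-1-8}) converges to the claimed constant. For attainability in $D^{2,2}(\mathbb{R}^{N})$ when $n\geq 3$, it suffices to verify $\phi\in D^{2,2}(\mathbb{R}^{N})$. Near each pole $a_{i}$ one has $|\Delta\phi(x)|^{2}\sim|x-a_{i}|^{2(4-N)/n-4}$, whose local integrability in $\mathbb{R}^{N}$ is equivalent to $2(4-N)/n-4+N>-1$, i.e.\ $(N-4)(1-2/n)>0$, which holds iff $n\geq 3$; at infinity $\phi\sim|x|^{4-N}$ gives $|\Delta\phi|\lesssim|x|^{-N}$, which is square integrable. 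When $n=2$ the local exponent just fails, so $\phi\notin D^{2,2}(\mathbb{R}^{N})$, the minimizing sequence escapes the space, and the infimum is not attained.

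The main obstacle will be the bookkeeping in the expansion of $|\Delta(\phi v)|^{2}$ and the subsequent integrations by parts, in particular transforming the cross term $4\int\phi(\Delta v)(\nabla\phi\cdot\nabla v)dx$ into a form compatible with (\ref{T-1-7}) and matching — not up to a benign constant but exactly — the asymmetric coefficients $\nu_{k,i,j}$ that separate indices $k\in\{i,j\}$ from the rest. Getting these coefficients right is what forces the precise form of $V_{n}$ in the statement, and is the delicate content of the argument.
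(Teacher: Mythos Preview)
Your plan for the inequality itself has a genuine gap, and it stems from a misreading of what the paper's ``iterative method'' is. The paper does \emph{not} avoid computing $\Delta^{2}\phi/\phi$. Its iteration is an iteration of Lemma~\ref{lemma2.2}: having obtained $\Delta\phi_{s}$ in the form (\ref{T-2-6}), one observes that each summand $\hat\phi_{s,i}$, $\hat\phi_{s,i,j}$ is again a product $\prod_{k}|x-a_{k}|^{\beta_{k}}$ with $\sum_{k}\beta_{k}$ fixed, so Lemma~\ref{lemma2.2} applies a second time and yields the closed formula (\ref{T-2-10}) for $\Delta^{2}\phi_{s}/\phi_{s}$. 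At $s=4-N$ the coefficients $N+s-4$ and $4-N-s$ kill the first two sums in (\ref{T-2-10}), leaving precisely $\frac{N^{2}(N-4)^{2}}{n^{4}}V_{n}$; the inequality then drops out of the identity (\ref{T-2-2}), whose right-hand side is nonnegative because $-\Delta\phi\geq 0$. The asymmetric constants $\nu_{k,i,j}$ are not ``matched'' after the fact --- they are exactly $(s-4)\zeta_{k,i,j}$ at $s=4-N$, produced mechanically by the second application of the lemma.

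Your alternative, replacing the last integration by parts by an appeal to the Hardy inequality (\ref{T-1-7}), does not work as stated. After the substitution $u=\phi v$ the gradient terms that appear are weighted, of the type $\int(-\phi\Delta\phi)|\nabla v|^{2}dx$ or $\int\phi^{2}|\nabla v|^{2}dx$, whereas (\ref{T-1-7}) controls only the unweighted $\int|\nabla u|^{2}dx$; there is no way to insert the required weight. More fundamentally, your ``leading term'' $\int v^{2}(\Delta\phi)^{2}dx$ carries the potential $(\Delta\phi/\phi)^{2}$, which contains $W_{1}^{2}$, $W_{1}W_{2}$ and $W_{2}^{2}$ pieces and is \emph{not} equal to $\Delta^{2}\phi/\phi=\frac{N^{2}(N-4)^{2}}{n^{4}}V_{n}$. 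Any attempt to pass from the former to the latter by inequalities will either introduce slack (destroying sharpness) or, if done exactly, will amount to recomputing $\Delta^{2}\phi/\phi$ by a longer route. Your treatment of sharpness, attainability for $n\geq 3$, and non-attainability for $n=2$ is in line with the paper's Section~3 (modulo the slip $>-1$, which should be $>0$, in your local-integrability threshold; the conclusion $(N-4)(1-2/n)>0$ is correct).
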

Especially, $\nu_{k,i,j}=1$ ($\forall i,j,k=1,2$) when $n=2$, we thus have
\begin{corollary}\label{cor1.4}
Assume $N\geq 5$, $a_{1},\cdots,a_{n}\in\mathbb{R}^{N}$.  The following inequality holds for any $u\in C_{0}^{\infty}(\mathbb{R}^{N})$,
\begin{equation}\label{T-1-9}
\int_{\mathbb{R}^{N}}|\Delta u|^{2}dx \geq \frac{N^{2}(N-4)^{2}}{16}
\int_{\mathbb{R}^{N}}\frac{|a_{1}-a_{2}|^{4}}{|x-a_{1}|^{4}|x-a_{2}|^{4}}|u|^{2}dx,
\end{equation}
where the constant $\frac{N^{2}(N-4)^{2}}{16}$ is sharp and it cannot be attained in $D^{2,2}(\mathbb{R}^{N})$.
\end{corollary}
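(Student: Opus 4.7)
The inequality (\ref{T-1-9}) and the sharpness of $\frac{N^{2}(N-4)^{2}}{16}$ are direct specializations of Theorem~\ref{thm1.3} to $n=2$: the pairs $(i,j)$ and $(k,l)$ appearing in $V_n$ both collapse to $(1,2)$, and $\nu_{k,i,j}=\frac{N+2n-4}{N}=1$ for every $k\in\{1,2\}$, since such $k$ necessarily coincides with $i$ or $j$. Hence $V_{2}(x)=\frac{|a_1-a_2|^4}{|x-a_1|^4|x-a_2|^4}$ and the optimal constant becomes $\frac{N^{2}(N-4)^{2}}{n^{4}}\big|_{n=2}=\frac{N^{2}(N-4)^{2}}{16}$.

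The substantive part is the non-attainment. My plan is to argue by contradiction: suppose $u^{*}\in D^{2,2}(\mathbb{R}^{N})\setminus\{0\}$ attains the infimum. Retracing the equality cases in the chain of inequalities used to prove Theorem~\ref{thm1.3} (for $n=2$ this chain reduces to a single Cauchy--Schwarz-type estimate combined with the identity (\ref{T-2-1})), one is forced to conclude that $u^{*}$ must be a scalar multiple of the formal ground state
\[
\phi(x)=|x-a_{1}|^{(4-N)/2}|x-a_{2}|^{(4-N)/2}
\]
exhibited in Theorem~\ref{thm1.3}.

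It then suffices to verify that $\phi\notin D^{2,2}(\mathbb{R}^{N})$. A direct asymptotic computation using $\Delta|x-a_i|^{\alpha}=\alpha(\alpha+N-2)|x-a_i|^{\alpha-2}$ with $\alpha=(4-N)/2$ yields $\Delta\phi(x)\sim \frac{N(4-N)}{4}\,|x-a_{j}|^{(4-N)/2}\,|x-a_{i}|^{-N/2}$ near each pole $a_{i}$ (with $\{i,j\}=\{1,2\}$), so $|\Delta\phi|^{2}\sim C\,|x-a_{i}|^{-N}$ and $\int_{B(a_{i},\delta)}|\Delta\phi|^{2}\,dx\sim\int_{0}^{\delta}s^{-1}\,ds=+\infty$. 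At infinity $\phi\sim|x|^{4-N}$ gives $|\Delta\phi|^{2}\sim|x|^{4-2N}$, which is integrable outside a large ball whenever $N\geq 5$; the obstruction is purely local, reflecting that the exponent $(4-N)/2$ is too negative. This contradiction completes the argument.

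The main obstacle is the uniqueness step, namely pinning down every minimizer as a scalar multiple of $\phi$. The cleanest route is via equality cases in the integral identities that underlie the proof of Theorem~\ref{thm1.3}. Should that become delicate, an alternative is to pass to the Euler--Lagrange equation $\Delta^{2}u=\frac{N^{2}(N-4)^{2}}{16}\frac{|a_1-a_2|^{4}}{|x-a_{1}|^{4}|x-a_{2}|^{4}}u$ and classify its positive distributional solutions by a Liouville-type argument exploiting the multiplicative symmetry of the potential under $(|x-a_{1}||x-a_{2}|)^{4-N}$-type transformations.
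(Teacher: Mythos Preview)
Your approach is essentially the paper's: specialize Theorem~\ref{thm1.3} for the inequality and sharpness, then rule out attainment by showing that the only candidate minimizer $\phi(x)=|x-a_1|^{(4-N)/2}|x-a_2|^{(4-N)/2}$ fails to lie in $D^{2,2}(\mathbb{R}^N)$ because $|\Delta\phi|^2\sim c\,|x-a_i|^{-N}$ near each pole (this is exactly the computation behind Proposition~\ref{prop2.1} in the paper for $n=2$). One correction on the uniqueness step you flag as the ``main obstacle'': the identity you want is the Rellich-type identity~(\ref{T-2-2}), not the Hardy identity~(\ref{T-2-1}). The right-hand side of~(\ref{T-2-2}) is the sum of two nonnegative integrals (the second because $-\Delta\phi\geq 0$ by~(\ref{T-2-12})); since in fact $-\phi\Delta\phi>0$ a.e., equality forces $\nabla(u/\phi)=0$ a.e., so $u=c\phi$ immediately---no Cauchy--Schwarz step and no Euler--Lagrange/Liouville fallback is needed.
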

Note that it is a surprising fact that the sharp constant in (\ref{T-1-9}) is the same as the classical sharp Rellich constant with single pole.

Next, we consider the criticality of the biharmonic Schr\"{o}dinger operator
\begin{equation*}
H_{n}:=\Delta^{2}-\frac{N^{2}(N-4)^{2}}{n^{4}}V_{n}.
\end{equation*}
We now introduce some definitions for the classification of the critical Schr\"{o}dinger operator.
\begin{definition}
Assume the Schr\"{o}dinger/biharmonic Schr\"{o}dinger operator $H$ is \emph{critical} in $\mathbb{R}^{N}$.  $H$ is said to be \emph{positive-critical} in $\mathbb{R}^{N}$ if the sharp constant $\lambda^{*}$ in (\ref{T-1-3}) is achieved in $D^{\sigma,2}(\mathbb{R}^{N})$. Otherwise, $H$ is called \emph{null-critical} in $\mathbb{R}^{N}$.
\end{definition}

The attainability of constant $\frac{N^{2}(N-4)^{2}}{n^{4}}$ in (\ref{T-1-8}) implies, in fact, the criticality of $H_{n}$ when $n\geq3$. More precisely, we have the following criticality result of biharmonic Schr\"{o}dinger operator $H_{n}$ for $n\geq2$.
\begin{proposition}\label{prop1.6}
The biharmonic Schr\"{o}dinger operator $H_{n}=\Delta^{2}-\frac{N^{2}(N-4)^{2}}{n^{4}}V_{n}$ is critical in $\mathbb{R}^{N}$ for $n\geq2$. Moreover, $H_{n}$ is positive-critical for $n\geq3$ and null-critical for $n=2$.
\end{proposition}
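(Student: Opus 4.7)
My strategy mirrors the dichotomy already visible in Theorem \ref{thm1.3} and Corollary \ref{cor1.4}: the extremal profile $\phi(x)=\prod_{i=1}^{n}|x-a_{i}|^{(4-N)/n}$ belongs to $D^{2,2}(\mathbb{R}^{N})$ precisely when $n\geq 3$, and this dichotomy governs whether criticality is positive or null.

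\emph{Positive-criticality for $n\geq 3$.} I argue by contradiction. Suppose $H_{n}$ were subcritical, so that some nonzero nonnegative continuous $W$ satisfies
\begin{equation*}
\int_{\mathbb{R}^{N}}|\Delta u|^{2}\,dx\geq \frac{N^{2}(N-4)^{2}}{n^{4}}\int_{\mathbb{R}^{N}}V_{n}|u|^{2}\,dx+\int_{\mathbb{R}^{N}}W|u|^{2}\,dx\quad\forall u\in C_{0}^{\infty}(\mathbb{R}^{N}).
\end{equation*}
By density this inequality extends to $D^{2,2}(\mathbb{R}^{N})$. Applying it to the minimizer $u=\phi$ produced by Theorem \ref{thm1.3}, the first two terms cancel by equality in (\ref{T-1-8}), forcing $\int W\phi^{2}\,dx\leq 0$. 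Since $\phi>0$ everywhere and $W\geq 0$ is continuous and not identically zero, this is impossible. Hence $H_{n}$ is critical, and attainability of the sharp constant in $D^{2,2}(\mathbb{R}^{N})$ upgrades this to positive-criticality by the very definition of the latter.

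\emph{Null-criticality for $n=2$.} Corollary \ref{cor1.4} already gives non-attainability of the sharp constant, so once criticality is established the operator is automatically null-critical. To prove criticality I construct a null-sequence by truncating $\phi$ with logarithmic cutoffs, adapting the Devyver--Fraas--Pinchover scheme to the bi-Laplacian. First verify by direct differentiation that $\phi$ satisfies the Euler--Lagrange equation $\Delta^{2}\phi=\frac{N^{2}(N-4)^{2}}{16}V_{2}\phi$ on $\mathbb{R}^{N}\setminus\{a_{1},a_{2}\}$. Pick a smooth family $\eta_{k}$ equal to $1$ on $B(0,e^{k})\setminus\bigl(B(a_{1},e^{-k})\cup B(a_{2},e^{-k})\bigr)$ and vanishing outside a slight enlargement, with radial profile $\eta_{k}(x)=\theta(\log|x-a_{i}|/k)$ near each pole so that $|\nabla^{j}\eta_{k}|\lesssim k^{-1}|x-a_{i}|^{-j}$ for $j=1,2$ in each transition annulus. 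Set $\phi_{k}:=\phi\eta_{k}\in C_{0}^{\infty}(\mathbb{R}^{N}\setminus\{a_{1},a_{2}\})$. Expanding $|\Delta(\phi\eta_{k})|^{2}$ and using the Euler--Lagrange identity via two integrations by parts reduces
\begin{equation*}
R_{k}:=\int_{\mathbb{R}^{N}}|\Delta\phi_{k}|^{2}\,dx-\frac{N^{2}(N-4)^{2}}{16}\int_{\mathbb{R}^{N}}V_{2}\phi_{k}^{2}\,dx
\end{equation*}
to a finite sum of integrals over $\supp\nabla\eta_{k}$ involving products of the form $\nabla^{i}\phi\cdot\nabla^{j}\eta_{k}$. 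The logarithmic smallness of $\nabla^{j}\eta_{k}$, combined with cancellations among the cross terms, is designed to yield $R_{k}\to 0$. Since $\int W\phi_{k}^{2}\,dx\to\int W\phi^{2}\,dx>0$ for any admissible nonzero $W$ (the integrand is locally integrable because $\phi^{2}\sim|x-a_{i}|^{4-N}$ at each pole and $N\geq 5$), subcriticality $H_{2}-W\geq 0$ applied to $\phi_{k}$ would force $R_{k}\geq\int W\phi_{k}^{2}\,dx$, which fails in the limit $k\to\infty$; this establishes criticality.

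The main obstacle is producing the decay $R_{k}\to 0$ rather than mere boundedness $R_{k}=O(1)$: a naive bump cutoff together with straightforward power counting (using $|\nabla^{i}\phi|\sim|x-a_{i}|^{(4-N)/2-i}$) gives only $O(1)$ on several individual terms, which does not contradict subcriticality since $\int W\phi^{2}\,dx$ is itself a positive finite number. Extracting genuine decay requires either (i) exploiting algebraic cancellations between the cross terms produced by the fourth-order integration by parts, or (ii) employing a ground-state-type substitution $u=\phi v$ that factorizes $R_{k}$ as a quadratic form in $v=\eta_{k}$ with explicit $\phi$-dependent weights, for which the logarithmic gauge then forces vanishing in the limit. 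Verifying $\Delta^{2}\phi=\lambda^{*}V_{2}\phi$ pointwise on the punctured domain and the final passage to $\phi_{k}\in C_{0}^{\infty}(\mathbb{R}^{N})$ via inner mollification (since $\eta_{k}$ already vanishes near each pole) are routine by comparison.
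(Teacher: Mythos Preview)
Your treatment of $n\geq 3$ is correct and coincides with the paper's: attainability of the sharp constant by $\phi\in D^{2,2}(\mathbb{R}^{N})$ immediately rules out any nontrivial remainder $W$, since equality in (\ref{T-1-8}) for $u=\phi$ forces $\int W\phi^{2}=0$.

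For $n=2$ your outline is on the right track but incomplete at the crucial step. You correctly flag that naive power counting on the cross terms of $|\Delta(\phi\eta_{k})|^{2}$ only gives $R_{k}=O(1)$, and you list two possible fixes. Your option (ii), the ground-state substitution $u=\phi v$, is exactly what the paper does: identity (\ref{T-2-2}) reads
\[
\langle H_{2}u,u\rangle=\int_{\mathbb{R}^{N}}|\phi\Delta v+2\nabla\phi\cdot\nabla v|^{2}\,dx-2\int_{\mathbb{R}^{N}}\phi\Delta\phi|\nabla v|^{2}\,dx,
\]
and since $-\Delta\phi\geq 0$ both pieces are nonnegative, so no cancellation argument is needed. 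The paper then takes the specific logarithmic cutoff $v_{\epsilon}$ (the same profile you describe) and estimates the right-hand side by $I_{1}+I_{2}+I_{3}$ with $I_{1}=\int\phi^{2}|\Delta v_{\epsilon}|^{2}$, $I_{2}=\int|\nabla\phi\cdot\nabla v_{\epsilon}|^{2}$, $I_{3}=\int|\phi\Delta\phi||\nabla v_{\epsilon}|^{2}$. Each reduces, on the inner annuli $B_{\epsilon}(a_{i})\setminus B_{\epsilon^{2}}(a_{i})$, to $(\log 1/\epsilon)^{-2}\int_{\epsilon^{2}}^{\epsilon}r^{-1}\,dr=(\log 1/\epsilon)^{-1}\to 0$, and on the outer annulus to $(\log 1/\epsilon)^{2}\int_{1/\epsilon}^{1/\epsilon^{2}}r^{3-N}\,dr\to 0$ since $N\geq 5$. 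This explicit verification is the missing content in your proposal: you assert that ``the logarithmic gauge then forces vanishing in the limit'' without checking it, and the borderline nature of the exponent (the integrand near each pole is exactly $|x-a_{i}|^{-N}$ up to constants, so the logarithmic factor is essential) means this step cannot be waved through. Your option (i), seeking cancellations in the raw expansion of $|\Delta(\phi\eta_{k})|^{2}$, is unnecessary once the factorization (\ref{T-2-2}) is in hand.
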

The plan of this paper is as follows. In section 2 we use the method of super-solutions to obtain the $L^2$-case multipolar Rellich inequality (\ref{T-1-8}) with sharp constant. In section 3 we prove the attainability or non-attainability of the constant $\frac{N^{2}(N-4)^{2}}{n^{4}}$ and the criticality of the biharmonic Schr\"{o}dinger operator $H_{n}$.

\section{$L^2$-case multipolar Rellich inequality with sharp constant}
In this section we employ the method of super-solutions to prove the sharp multipolar Rellich inequality.  Recall that the following Hardy-type identity: Let positive function $\phi\in C^{2}(\Omega)$. It holds for any  $u\in C_{0}^{\infty}(\Omega)$ that
\begin{equation}\label{T-2-1}
\int_{\Omega}\left|\nabla u-\frac{\nabla \phi}{\phi}u\right|^{2}dx=\int_{\Omega}|\nabla u|^{2}dx+\int_{\Omega}\frac{\Delta \phi}{\phi}|u|^{2}dx.
\end{equation}
For nonnegative function $W(x)\in L_{loc}^{1}(\Omega)$, the method of super-solutions shows that if there exists a positive function $\phi$ which is a super-solution of the Schr\"{o}dinger operator $-\Delta-W$, i.e. $(-\Delta-W)\phi\geq 0$, then $-\Delta-W\geq0$, which means that
\begin{equation*}
\int_{\Omega}|\nabla u|^{2}dx\geq\int_{\Omega}W|u|^{2}dx,\hspace{2mm} \forall u\in C_{0}^{\infty}(\Omega).
\end{equation*}
Thus, one only need to choose suitable $\phi$ to obtain Hardy inequalities for any potentials $W\leq \frac{-\Delta \phi}{\phi}$. This method can yield most of the well-known Hardy inequalities, for example, choosing $\phi(x)=|x|^{\frac{2-N}{2}}$ to yield the classical Hardy inequality and $\phi_{s}(x)=\prod_{i=1}^{n}|x-a_{i}|^{s\alpha_{i}}$ to obtain inequality (\ref{T-1-7}) (see also Lemma \ref{lemma2.2}).

In order to  apply the method of super-solutions to obtain the multipolar Rellich inequality, we need to find an Rellich-type identity analogous to (\ref{T-2-1})  for $\Omega=\mathbb{R}^{N}$, and choose a special auxiliary function  $\phi$. Assume $0<\phi\in C^{4}(\Omega)$ for now, we start from the following identity
\begin{equation*}
\int_{\Omega}\left|\Delta u-\frac{\Delta \phi}{\phi}u\right|^{2}dx=\int_{\Omega}|\Delta u|^{2}dx+\int_{\Omega}\frac{(\Delta \phi)^{2}}{\phi^{2}}|u|^{2}dx
                   -\int_{\Omega}2u\Delta u\frac{\Delta \phi}{\phi}dx.
\end{equation*}
Denoting $v=u\phi^{-1}$, then,
\begin{equation*}
\begin{split}
\int_{\Omega}\left|\Delta u-\frac{\Delta \phi}{\phi}u\right|^{2}dx=  & \int_{\Omega}|\Delta u|^{2}dx+\int_{\Omega}(\Delta \phi)^{2}|v|^{2}dx
                   -\int_{\Omega}2v\Delta(v\phi)\Delta \phi dx. \\
  =  & \int_{\Omega}|\Delta u|^{2}dx-\int_{\Omega}(\Delta \phi)^{2}|v|^{2}dx
                   -\int_{\Omega}2v\Delta \phi(\phi\Delta v+2\nabla\phi\cdot\nabla v)dx \\
  =  & \int_{\Omega}|\Delta u|^{2}dx-\int_{\Omega}\Delta \phi\left(\Delta \phi|v|^{2}
                   +2v\Delta v\phi+2\nabla \phi\cdot\nabla(|v|^{2})\right)dx  \\
  =  & \int_{\Omega}|\Delta u|^{2}dx-\int_{\Omega}\Delta \phi\left(\Delta(\phi|v|^{2})
                   -2|\nabla v|^{2}\phi\right)dx \\
  =  & \int_{\Omega}|\Delta u|^{2}dx-\int_{\Omega}\frac{\Delta^{2} \phi}{\phi}|u|^{2}dx
                   +2\int_{\Omega}\phi\Delta\phi|\nabla v|^{2}dx \\
\end{split}
\end{equation*}
Thus, we obtain
\begin{equation}\label{T-2-2}
 \int_{\Omega}|\Delta u|^{2}dx-\int_{\Omega}\frac{\Delta^{2} \phi}{\phi}|u|^{2}dx=\int_{\Omega}|\phi\Delta v+2\nabla\phi\cdot\nabla v|^{2}dx
                   -2\int_{\Omega}\phi\Delta\phi|\nabla v|^{2}dx.
\end{equation}
\begin{lemma}\label{lemma2.1}
\emph{(See also \cite{C})} Assume $W(x)\in L_{loc}^{1}(\Omega)$ is a nonnegative function. If $0<\phi\in C^{4}(\Omega)$ and $-\Delta\phi\geq0$ in $\Omega$, such that the following partial differential inequality holds
\begin{equation*}
(\Delta^{2}-W)\phi\geq 0\hspace{2mm}in\hspace{2mm}\Omega,
\end{equation*}
then, we have the following Rellich inequality
\begin{equation*}
\int_{\Omega}|\Delta u|^{2}dx\geq\int_{\Omega}W|u|^{2}dx,\hspace{2mm} \forall u\in C_{0}^{\infty}(\Omega).
\end{equation*}
\end{lemma}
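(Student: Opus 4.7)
The plan is to invoke the Rellich-type identity (\ref{T-2-2}) that was just derived above the statement, and combine it with the two sign hypotheses on $\phi$ to bound the Rellich quadratic form from below by a manifestly nonnegative quantity. Set $v:=u\phi^{-1}$. Since $u\in C_{0}^{\infty}(\Omega)$ while $\phi\in C^{4}(\Omega)$ is strictly positive, $v$ is compactly supported and $C^{4}$ on $\Omega$, so every integral appearing below converges and the integrations by parts used to derive (\ref{T-2-2}) are legitimate. The hypothesis $W\in L_{\mathrm{loc}}^{1}(\Omega)$ together with compact support of $u$ ensures that $\int_{\Omega}W|u|^{2}\,dx<\infty$.

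Next I would exploit the differential inequality $(\Delta^{2}-W)\phi\geq 0$. Dividing pointwise by $\phi>0$ gives $\Delta^{2}\phi/\phi\geq W$, and multiplying by $|u|^{2}\geq 0$ and integrating yields
\begin{equation*}
\int_{\Omega}|\Delta u|^{2}\,dx-\int_{\Omega}W|u|^{2}\,dx\;\geq\;\int_{\Omega}|\Delta u|^{2}\,dx-\int_{\Omega}\frac{\Delta^{2}\phi}{\phi}|u|^{2}\,dx.
\end{equation*}
By identity (\ref{T-2-2}) the right-hand side equals
\begin{equation*}
\int_{\Omega}|\phi\Delta v+2\nabla\phi\cdot\nabla v|^{2}\,dx\;-\;2\int_{\Omega}\phi\,\Delta\phi\,|\nabla v|^{2}\,dx.
\end{equation*}
The first term is trivially nonnegative as the integral of a square. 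The second term is nonnegative because the assumption $-\Delta\phi\geq 0$ together with $\phi>0$ forces $\phi\,\Delta\phi\leq 0$, hence $-2\phi\,\Delta\phi\,|\nabla v|^{2}\geq 0$. Chaining the inequalities produces $\int_{\Omega}|\Delta u|^{2}\,dx\geq\int_{\Omega}W|u|^{2}\,dx$, which is the claim.

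There is no genuine obstacle here: all the substantive work has been absorbed into the derivation of identity (\ref{T-2-2}), and the role of the two sign hypotheses in the lemma is precisely to make both terms on the right of that identity favorable. The one conceptual point worth flagging is the necessity of the superharmonicity assumption $-\Delta\phi\geq 0$: without it the cross term $-2\int_{\Omega}\phi\,\Delta\phi\,|\nabla v|^{2}\,dx$ could take either sign, and the direct super-solution conclusion would fail. This is why the lemma, unlike its second-order analogue via (\ref{T-2-1}), requires an auxiliary sign condition on $\phi$ in addition to the super-solution property for $\Delta^{2}-W$.
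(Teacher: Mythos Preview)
Your proof is correct and follows exactly the approach intended by the paper: the identity (\ref{T-2-2}) derived immediately above the lemma is the whole substance, and you correctly read off that the two sign hypotheses $-\Delta\phi\geq 0$ and $(\Delta^{2}-W)\phi\geq 0$ make both terms on the right of (\ref{T-2-2}) nonnegative, yielding the inequality. The paper does not spell out a separate proof beyond deriving (\ref{T-2-2}) and citing \cite{C}, so your write-up is in fact more explicit than the paper's own treatment.
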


We deduce from Lemma \ref{lemma2.1} that: if there is a positive function $\phi$ with  $\frac{\Delta^{2} \phi}{\phi}\in L_{loc}^{1}(\mathbb{R}^{N})$, then it holds
\begin{equation*}
\int_{\mathbb{R}^{N}}|\Delta u|^{2}dx\geq\int_{\mathbb{R}^{N}}\frac{\Delta^{2} \phi}{\phi}|u|^{2}dx,\hspace{2mm}\forall u\in C_{0}^{\infty}(\mathbb{R}^{N}).
\end{equation*}
 We now establish the following  lemma.
\begin{lemma}\label{lemma2.2}
Let $\phi_{s}(x)=\prod\limits_{i=1}^{n}|x-a_{i}|^{s\alpha_{i}}$, $s\in \mathbb{R}$, $0\leq\alpha_{i}\leq1$ and $\sum_{k=1}^{n}\alpha_{i}=1$.  Then
\begin{equation}\label{T-2-3}
\Delta\phi_{s}=\phi_{s}\left(s(N+s-2)\sum_{i=1}^{n}\frac{\alpha_{i}}{|x-a_{i}|^2}
         -s^{2}\sum_{1\leq i<j\leq n}^{n}\frac{\alpha_{i}\alpha_{j}\left|a_{i}-a_{j}\right|^{2}}{|x-a_{i}|^{2}|x-a_{j}|^{2}}\right).
\end{equation}
\end{lemma}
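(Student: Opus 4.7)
The plan is to exploit the multiplicative structure of $\phi_s$ by passing to logarithms. Setting $f:=\log\phi_s = s\sum_{i=1}^n\alpha_i\log|x-a_i|$, I will use the standard identity
\begin{equation*}
\frac{\Delta\phi_s}{\phi_s} = \Delta f + |\nabla f|^2,
\end{equation*}
valid for any positive $C^2$ function. This reduces the problem to separately computing $\Delta f$ and $|\nabla f|^2$, both of which decouple nicely across the $n$ poles.

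For $\Delta f$, I would apply the well-known fact that $\Delta \log|x-a_i| = (N-2)/|x-a_i|^2$ (easily checked using $\Delta r^\alpha = \alpha(\alpha+N-2)r^{\alpha-2}$ and differentiating in $\alpha$, or directly from the radial Laplacian). This gives
\begin{equation*}
\Delta f = s(N-2)\sum_{i=1}^n \frac{\alpha_i}{|x-a_i|^2}.
\end{equation*}
For $|\nabla f|^2$, since $\nabla\log|x-a_i| = (x-a_i)/|x-a_i|^2$, expanding the square produces diagonal terms $s^2\sum_i \alpha_i^2/|x-a_i|^2$ and cross terms $2s^2\sum_{i<j}\alpha_i\alpha_j (x-a_i)\cdot(x-a_j)/(|x-a_i|^2|x-a_j|^2)$.

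The key algebraic step — and essentially the only nontrivial one — is rewriting the cross terms via the polarization identity
\begin{equation*}
2(x-a_i)\cdot(x-a_j) = |x-a_i|^2 + |x-a_j|^2 - |a_i-a_j|^2,
\end{equation*}
which is exactly what introduces the $|a_i-a_j|^2$ factors appearing in the target formula (\ref{T-2-3}). After this substitution, the cross-term contribution splits as
\begin{equation*}
s^2\sum_{i<j}\alpha_i\alpha_j\Bigl(\tfrac{1}{|x-a_i|^2}+\tfrac{1}{|x-a_j|^2}\Bigr) - s^2\sum_{i<j}\alpha_i\alpha_j\tfrac{|a_i-a_j|^2}{|x-a_i|^2|x-a_j|^2}.
\end{equation*}

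Finally I would collect terms: the first double sum above equals $\sum_i \alpha_i(1-\alpha_i)/|x-a_i|^2$ by the constraint $\sum_j\alpha_j=1$, which combines with the diagonal $s^2\sum_i\alpha_i^2/|x-a_i|^2$ to give $s^2\sum_i\alpha_i/|x-a_i|^2$. Adding $\Delta f$ produces the coefficient $s(N-2)+s^2 = s(N+s-2)$ in front of the single-pole terms, while the $|a_i-a_j|^2$ sum survives with coefficient $-s^2$. This yields exactly (\ref{T-2-3}). No obstacle is genuinely delicate here; the only thing to get right is the bookkeeping that turns the diagonal $\alpha_i^2$ into $\alpha_i$ using $\sum_j\alpha_j=1$, which is precisely the place where the normalization hypothesis is consumed.
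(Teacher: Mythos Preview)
Your argument is correct and is essentially the same computation as the paper's: both compute $\Delta\phi_s/\phi_s$ via the logarithmic-derivative identity, invoke the polarization $2(x-a_i)\cdot(x-a_j)=|x-a_i|^2+|x-a_j|^2-|a_i-a_j|^2$, and use $\sum_i\alpha_i=1$ to collapse the diagonal terms. The only cosmetic difference is that the paper packages the use of $\sum_i\alpha_i=1$ up front via the variance-type identity $\bigl|\sum_i\alpha_i v_i\bigr|^2-\sum_i\alpha_i|v_i|^2=-\sum_{i<j}\alpha_i\alpha_j|v_i-v_j|^2$ (their display (\ref{T-2-4})), whereas you expand first and apply the constraint at the end; the content is identical.
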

\begin{proof}
Rewriting $\phi_{s}$ with the form
\begin{equation*}
\phi_{s}(x)=\prod\limits_{i=1}^{n}\phi_{i}^{\alpha_{i}},
\end{equation*}
where $\phi_{i}=|x-a_{i}|^{s}$. By direct computation,
\begin{equation*}
\Delta \phi_{s}=\phi_{s}\left|\sum_{i=1}^{n}\alpha_{i}\frac{\nabla \phi_{i}}{\phi_{i}}\right|^{2}+\phi_{s}\left(\sum_{i=1}^{n}\alpha_{i}\frac{\Delta \phi_{i}}{\phi_{i}}-\sum_{i=1}^{n}\alpha_{i}\frac{|\nabla \phi_{i}|^{2}}{\phi_{i}^{2}}\right).
\end{equation*}
Simplifying the expression we obtain
\begin{equation}\label{T-2-4}
\Delta\phi_{s}=\phi_{s}\left(\sum_{i=1}^{n}\alpha_{i}\frac{\Delta \phi_{i}}{\phi_{i}}-\sum_{1\leq i<j\leq n}\alpha_{i}\alpha_{j}
          \left|\frac{\nabla \phi_{i}}{\phi_{i}}-\frac{\nabla \phi_{j}}{\phi_{j}}\right|^{2}\right).
\end{equation}
Since $\phi_{i}=|x-a_{i}|^{s}$,
\begin{equation}\label{T-2-5}
\frac{\Delta \phi_{i}}{\phi_{i}}=s(N+s-2)\frac{1}{|x-a_{i}|^{2}},\hspace{4mm}
\frac{|\nabla \phi_{i}|^{2}}{\phi_{i}^{2}}=s^{2}\frac{1}{|x-a_{i}|^{2}}.
\end{equation}
Inserting  (\ref{T-2-5}) into (\ref{T-2-4}) we have
\begin{equation*}
\Delta\phi_{s}=\phi_{s}\left(s(N+s-2)\sum_{i=1}^{n}\alpha_{i}\frac{1}{|x-a_{i}|^{2}} -s^{2}\sum_{i<j}^{n}\alpha_{i}\alpha_{j}\frac{\left|a_{i}-a_{j}\right|^{2}}{|x-a_{i}|^{2}|x-a_{j}|^{2}}\right).
\end{equation*}
Thus the proof of Lemma \ref{lemma2.2} is complete.
\end{proof}
In order to compute the $\Delta^{2}\phi_{s}$, we  write $\Delta\phi_{s}$ as
\begin{equation}\label{T-2-6}
\Delta \phi_{s}=s(N+s-2)\sum_{i=1}^{n}\alpha_{i}\hat{\phi}_{s,i}
         -s^{2}\sum_{1\leq i<j\leq n}^{n}\alpha_{i}\alpha_{j}\left|a_{i}-a_{j}\right|^{2}\hat{\phi}_{s,i,j}.
\end{equation}
where $\hat{\phi}_{s,i}=\prod\limits_{k=1}^{n}|x-a_{k}|^{(s-2)\xi_{k,i}}$, with
\begin{equation*}
\xi_{k,i}=\left\{\begin{array}{ll} \frac{s\alpha_{k}-2}{s-2}\hspace{5mm}&, k=i;\\
                                   \frac{s\alpha_{k}}{s-2}\hspace{5mm}&, k\neq i,\\
   \end{array}\right.
\end{equation*}
and $\hat{\phi}_{s,i,j}=\prod\limits_{k=1}^{n}|x-a_{k}|^{(s-4)\zeta_{k,i,j}}$, with
\begin{equation*}
\zeta_{k,i,j}=\left\{\begin{array}{ll} \frac{s\alpha_{k}-2}{s-4}\hspace{5mm}&, k=i\hspace{2mm}  or \hspace{2mm} k=j;\\
                                       \frac{s\alpha_{k}}{s-4}\hspace{5mm}&, otherwise.\\
   \end{array}\right.
\end{equation*}
It is easy to check that
\begin{equation*}
\sum_{k=1}^{n}\xi_{k,i}=1, \hspace{2mm} \hspace{2mm}\sum_{k=1}^{n}\zeta_{k,i,j}=1.
\end{equation*}
Applying Lemma \ref{lemma2.2} to $\hat{\phi}_{i}$ and $\hat{\phi}_{i,j}$ we have
\begin{equation}\label{T-2-7}
\Delta \hat{\phi}_{s,i}=\hat{\phi}_{s,i}\left((s-2)(N+s-4)\sum_{k=1}^{n}\frac{\xi_{k,i}}{|x-a_{k}|^2}
         -(s-2)^{2}\sum_{1\leq k<l\leq n}^{n}\frac{\xi_{k,i}\xi_{l,i}\left|a_{k}-a_{l}\right|^{2}}{|x-a_{k}|^{2}|x-a_{l}|^{2}}\right),
\end{equation}
\begin{equation}\label{T-2-8}
\Delta \hat{\phi}_{s,i,j}=\hat{\phi}_{s,i,j}\left((s-4)(N+s-6)\sum_{k=1}^{n}\frac{\zeta_{k,i,j}}{|x-a_{k}|^2}
         -(s-4)^{2}\sum_{1\leq k<l\leq n}^{n}\frac{\zeta_{k,i,j}\zeta_{l,i,j}\left|a_{k}-a_{l}\right|^{2}}{|x-a_{k}|^{2}|x-a_{l}|^{2}}\right).
\end{equation}
Since $\Delta$ is a linear operator, then
\begin{equation}\label{T-2-9}
\Delta^{2}\phi_{s}=s(N+s-2)\sum_{i=1}^{n}\alpha_{i}\Delta\hat{\phi}_{s,i}
         -s^{2}\sum_{1\leq i<j\leq n}^{n}\alpha_{i}\alpha_{j}\left|a_{i}-a_{j}\right|^{2}\Delta\hat{\phi}_{s,i,j}.
\end{equation}
Taking (\ref{T-2-7}) and (\ref{T-2-8}) into (\ref{T-2-9}), we get
\begin{equation*}
\begin{split}
\frac{\Delta^{2}\phi_{s}}{\phi_{s}}= & s(N+s-2)(s-2)(N+s-4)\sum_{i=1}^{n}\frac{\alpha_{i}}{|x-a_{i}|^2}
                          \left(\sum_{k=1}^{n}\frac{\xi_{k,i}}{|x-a_{k}|^2}\right) \\
    & -s(N+s-2)(s-2)^{2}\sum_{i=1}^{n}\frac{\alpha_{i}}{|x-a_{i}|^2}
                          \left(\sum_{1\leq k<l\leq n}^{n}\frac{\xi_{k,i}\xi_{l,i}\left|a_{k}-a_{l}\right|^{2}}{|x-a_{k}|^{2}|x-a_{l}|^{2}}\right) \\
    & -s^{2}(s-4)(N+s-6)\sum_{1\leq i<j\leq n}^{n}\frac{\alpha_{i}\alpha_{j}\left|a_{i}-a_{j}\right|^{2}}{|x-a_{i}|^{2}|x-a_{j}|^{2}}
                          \left(\sum_{k=1}^{n}\frac{\zeta_{k,i,j}}{|x-a_{k}|^2}\right) \\
    & +s^{2}(s-4)^{2}\sum_{1\leq i<j\leq n}^{n}\frac{\alpha_{i}\alpha_{j}\left|a_{i}-a_{j}\right|^{2}}{|x-a_{i}|^{2}|x-a_{j}|^{2}}
                          \left(\sum_{1\leq k<l\leq n}^{n}\frac{\zeta_{k,i,j}\zeta_{k,i,j}\left|a_{k}-a_{l}\right|^{2}}{|x-a_{k}|^{2}|x-a_{l}|^{2}}\right).
\end{split}
\end{equation*}
Thus, we arrive at
\begin{equation}\label{T-2-10}
\begin{split}
\frac{\Delta^{2}\phi_{s}}{\phi_{s}}= & s(N+s-2)(s-2)(N+s-4)\sum_{i=1}^{n}\frac{\alpha_{i}}{|x-a_{i}|^2}
                          \left(\sum_{k=1}^{n}\frac{\xi_{k,i}}{|x-a_{k}|^2}\right) \\
    & +2s^{2}(s-4)(4-N-s)\sum_{1\leq i<j\leq n}^{n}\frac{\alpha_{i}\alpha_{j}\left|a_{i}-a_{j}\right|^{2}}{|x-a_{i}|^{2}|x-a_{j}|^{2}}
                          \left(\sum_{k=1}^{n}\frac{\zeta_{k,i,j}}{|x-a_{k}|^2}\right) \\
    & +s^{2}(s-4)^{2}\sum_{1\leq i<j\leq n}^{n}\frac{\alpha_{i}\alpha_{j}\left|a_{i}-a_{j}\right|^{2}}{|x-a_{i}|^{2}|x-a_{j}|^{2}}
                          \left(\sum_{1\leq k<l\leq n}^{n}\frac{\zeta_{k,i,j}\zeta_{k,i,j}\left|a_{k}-a_{l}\right|^{2}}{|x-a_{k}|^{2}|x-a_{l}|^{2}}\right).
\end{split}
\end{equation}
The last equality holds because of the the following identity
\begin{equation*}
\sum_{i=1}^{n}\frac{\alpha_{i}}{|x-a_{i}|^2}
\left(\sum_{1\leq k<l\leq n}^{n}\frac{\xi_{k,i}\xi_{l,i}\left|a_{k}-a_{l}\right|^{2}}{|x-a_{k}|^{2}|x-a_{l}|^{2}}\right)
=\frac{s(s-4)}{(s-2)^{2}}\sum_{1\leq i<j\leq n}^{n}\frac{\alpha_{i}\alpha_{j}\left|a_{i}-a_{j}\right|^{2}}{|x-a_{i}|^{2}|x-a_{j}|^{2}}
                          \left(\sum_{k=1}^{n}\frac{\zeta_{k,i,j}}{|x-a_{k}|^2}\right).
\end{equation*}

\begin{theorem}\label{thm2.3}
Assume $4-N\leq s<0$, $N\geq 5$, $a_{1},\cdots,a_{n}\in\mathbb{R}^{N}$. Then  the following inequality holds for any $u\in C_{0}^{\infty}(\mathbb{R}^{N})$,
\begin{equation}\label{T-2-11}
\begin{split}
\int_{\mathbb{R}^{N}}&|\Delta u|^{2}dx \geq \frac{s(s-2)(2-N-s)(4-N-s)}{n^{2}}
\int_{\mathbb{R}^{N}}\sum_{1\leq i,k\leq n}\frac{\mu_{k,i}}{|x-a_{i}|^2|x-a_{k}|^{2}}|u|^{2}dx \\
     & +\frac{2s^{3}(4-N-s)}{n^{3}}\int_{\mathbb{R}^{N}}\sum_{1\leq i<j\leq n}\frac{\left|a_{i}-a_{j}\right|^{2}}{|x-a_{i}|^{2}|x-a_{j}|^{2}}
     \left(\sum_{1\leq k\leq n}\frac{\sigma_{k,i,j}}{|x-a_{k}|^{2}}\right)|u|^{2}dx \\
     & +\frac{s^{2}(s-4)^{2}}{n^{4}}\int_{\mathbb{R}^{N}}\sum_{1\leq i<j\leq n}\frac{\left|a_{i}-a_{j}\right|^{2}}{|x-a_{i}|^{2}|x-a_{j}|^{2}}
     \left(\sum_{1\leq k<l\leq n}\frac{\nu_{k,i,j}\nu_{l,i,j}\left|a_{k}-a_{l}\right|^{2}}{|x-a_{k}|^{2}|x-a_{l}|^{2}}\right)|u|^{2}dx,
\end{split}
\end{equation}
where
\begin{equation*}
\mu_{k,i}=\left\{\begin{array}{ll}
 \frac{s-2n}{s-2} &, k=i;\\
 \frac{s}{s-2}    &, k\neq i,\\
 \end{array}\right.\hspace{5mm}
\sigma_{k,i,j}=\left\{\begin{array}{ll}
\frac{s-2n}{s} &, k=i\hspace{2mm}  or \hspace{2mm} k=j;\\
1                      &,otherwise,\\
\end{array}\right.\hspace{5mm}
\nu_{k,i,j}=\left\{\begin{array}{ll}
\frac{s-2n}{s-4} &, k=i\hspace{2mm}  or \hspace{2mm} k=j;\\
\frac{s}{s-4}    &, otherwise.\\
\end{array}\right.
\end{equation*}
\end{theorem}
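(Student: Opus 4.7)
My plan is to apply the super-solution principle of Lemma \ref{lemma2.1} with the symmetric ansatz $\phi_s(x) = \prod_{i=1}^{n}|x-a_i|^{s/n}$, i.e.\ the choice $\alpha_i = 1/n$ in Lemma \ref{lemma2.2} and identity (\ref{T-2-10}). The proof then splits into two tasks: verify that $\phi_s$ is superharmonic, and match the resulting quantity $\int (\Delta^2\phi_s/\phi_s)|u|^2\,dx$ with the right-hand side of (\ref{T-2-11}).

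For the first task, substituting $\alpha_i=1/n$ into (\ref{T-2-3}) gives
\begin{equation*}
\Delta\phi_s = \phi_s\left(\frac{s(N+s-2)}{n}\sum_{i=1}^n\frac{1}{|x-a_i|^2} - \frac{s^2}{n^2}\sum_{1\leq i<j\leq n}\frac{|a_i-a_j|^2}{|x-a_i|^2|x-a_j|^2}\right).
\end{equation*}
In the regime $4-N\leq s<0$ with $N\geq 5$ we have $s<0$ and $N+s-2\geq 2>0$, so both $s(N+s-2)$ and $-s^2$ are nonpositive; hence $\Delta\phi_s\leq 0$ on $\mathbb{R}^N\setminus\{a_1,\ldots,a_n\}$. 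Setting $v=u/\phi_s$ in identity (\ref{T-2-2}), both integrals on the right-hand side are nonnegative (the first always, the second because $\phi_s>0$ and $\Delta\phi_s\leq 0$), and we conclude
\begin{equation*}
\int_{\mathbb{R}^N}|\Delta u|^2\,dx \geq \int_{\mathbb{R}^N}\frac{\Delta^2\phi_s}{\phi_s}|u|^2\,dx.
\end{equation*}

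For the second task, I would evaluate $\Delta^2\phi_s/\phi_s$ via (\ref{T-2-10}) at $\alpha_i=1/n$, using the bookkeeping identities $\xi_{k,i}=\mu_{k,i}/n$, $\zeta_{k,i,j}=\nu_{k,i,j}/n=\frac{s}{n(s-4)}\sigma_{k,i,j}$, together with the factorization $(N+s-2)(N+s-4)=(2-N-s)(4-N-s)$. After these substitutions the three summands of $\Delta^2\phi_s/\phi_s$ align term by term with the three integrals of (\ref{T-2-11}), producing the stated prefactors $\frac{s(s-2)(2-N-s)(4-N-s)}{n^2}$, $\frac{2s^3(4-N-s)}{n^3}$, and $\frac{s^2(s-4)^2}{n^4}$.

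The principal technical obstacle is not the algebraic matching, which is routine once the substitutions above are spotted, but rather the rigorous justification of applying identity (\ref{T-2-2}) to arbitrary $u\in C_0^\infty(\mathbb{R}^N)$ when $\phi_s$ blows up at each pole. The standard remedy is to multiply $u$ by a smooth cutoff vanishing on shrinking neighborhoods of the $a_i$, apply (\ref{T-2-2}) on the exhausting open sets, and pass to the limit; the passage is controlled because $\phi_s\to\infty$ forces $v=u/\phi_s\to 0$ at the poles, and because the weight $\Delta^2\phi_s/\phi_s$ behaves like $|x-a_i|^{-4}$ near each pole, which is locally integrable in dimensions $N\geq 5$.
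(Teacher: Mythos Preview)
Your proposal is correct and follows essentially the same approach as the paper: choose $\alpha_i=1/n$ in Lemma \ref{lemma2.2}, verify $-\Delta\phi_s\geq 0$ from (\ref{T-2-3}), and invoke the super-solution principle (Lemma \ref{lemma2.1}/identity (\ref{T-2-2})) with the expression (\ref{T-2-10}) for $\Delta^2\phi_s/\phi_s$. Your explicit bookkeeping $\xi_{k,i}=\mu_{k,i}/n$, $\zeta_{k,i,j}=\nu_{k,i,j}/n$ and your remark on handling the singularities of $\phi_s$ at the poles are elaborations the paper leaves implicit, but the underlying argument is the same.
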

\begin{proof}
Taking $\alpha_{i}=\frac{1}{n}$, $i=1,\cdots,n$, since $N\geq 5$, $\phi_{s}>0$, and
\begin{equation}\label{T-2-12}
-\Delta\phi_{s}=\frac{s(2-N-s)}{n}\phi\sum_{i=1}^{n}\frac{1}{|x-a_{i}|^{2}}
         +\frac{s^{2}}{n^{2}}\phi\sum_{1\leq i<j\leq n}^{n}\frac{\left|a_{i}-a_{j}\right|^{2}}{|x-a_{i}|^{2}|x-a_{j}|^{2}}\geq0,
\end{equation}
Theorem \ref{thm2.3} follows from Lemma \ref{lemma2.1}.
\end{proof}

\emph{Proof of inequality (\ref{T-1-8})}: Taking $s=4-N$ in (\ref{T-2-11}) we obtain inequality (\ref{T-1-8}).

\section{Attainability and Criticality}
In this section we prove the attainability  of constant $\frac{N^{2}(N-4)^{2}}{n^{4}}$ in inequality (\ref{T-1-8}) and the criticality of the biharmonic schr\"{o}dinger operator $H_{n}$ for  $N\geq5$.

\subsection{Attainability for constant $\frac{N^{2}(N-4)^{2}}{n^{4}}$}
Let $\phi(x)=\prod\limits_{i=1}^{n}|x-a_{i}|^{\frac{4-N}{n}}$. In view of (\ref{T-2-10}), $\phi(x)$ is the minimizer or virtual minimizer in inequality (\ref{T-1-8}). We need to check the integrability of $|\Delta\phi|^{2}$.
Let $s=4-N$ in (\ref{T-2-12}) we have
\begin{equation}\label{T-3-1}
\Delta\phi=-2(N-4)\phi\sum_{i=1}^{n}\frac{\alpha_{i}}{|x-a_{i}|^{2}}
         -(N-4)^{2}\phi\sum_{1\leq i<j\leq n}^{n}\frac{\alpha_{i}\alpha_{j}\left|a_{i}-a_{j}\right|^{2}}{|x-a_{i}|^{2}|x-a_{j}|^{2}}.
\end{equation}
Denote
\begin{equation*}
\begin{split}
a_{i}(x)=&\frac{\phi(x)^{2}}{|x-a_{i}|^{4}},\hspace{2mm}b_{i,j}(x)=\frac{\phi(x)^{2}}{|x-a_{i}|^{2}|x-a_{j}|^{2}},
\hspace{2mm} c_{i,j}(x)=\frac{\phi(x)^{2}}{|x-a_{i}|^{4}|x-a_{j}|^{2}}, \\
d_{i,j,k}&(x)=\frac{\phi(x)^{2}}{|x-a_{i}|^{2}|x-a_{j}|^{2}|x-a_{k}|^{2}}, \hspace{2mm} e_{i,j}(x)=\frac{\phi(x)^{2}}{|x-a_{i}|^{4}|x-a_{j}|^{4}}, \\
f_{i,j,k,l}(x)=&\frac{\phi(x)^{2}}{|x-a_{i}|^{2}|x-a_{j}|^{2}|x-a_{k}|^{2}|x-a_{l}|^{2}},\hspace{2mm}
g_{i,j,k}(x)=\frac{\phi(x)^{2}}{|x-a_{i}|^{4}|x-a_{j}|^{2}|x-a_{k}|^{2}}.
\end{split}
\end{equation*}
Expanding $|\Delta\phi|^{2}$, we know that $|\Delta\phi|^{2}$ is just a linear combination of these seven classes of functions. The integrability results of these function is as follows.
\begin{proposition}\label{prop2.1}
$a_{i}(x)$, $b_{i,j}(x)$ $c_{i,j}(x)$, $d_{i,j,k}(x)$, $e_{i,j}(x)$, $f_{i,j,k,l}(x)$, and $g_{i,j,k}(x)$ are all in $L^{1}(\mathbb{R}^{N})$ when $n\geq3$, while $a_{i}(x)$, $c_{i,j}(x)$, $e_{i,j}(x)$ and $g_{i,j,k}(x)$ are not in $L^{1}(\mathbb{R}^{N})$ when $n=2$.
\end{proposition}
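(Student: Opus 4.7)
The plan is to localize each integrand around every singular pole $a_m$ and around infinity, reduce to a single power of $|x-a_m|$ (respectively of $|x|$), and then apply the standard integrability criterion $\int_{B(a_m,r)}|x-a_m|^{\alpha}\,dx<\infty \iff \alpha>-N$ and $\int_{|x|>R}|x|^{\beta}\,dx<\infty \iff \beta<-N$. The uniform observation I would use is that as $x\to a_m$ only the $m$-th factor of $\phi(x)^{2}=\prod_{i=1}^{n}|x-a_i|^{2(4-N)/n}$ blows up or vanishes, while all the others $|x-a_i|^{2(4-N)/n}$ with $i\neq m$ tend to the positive constant $|a_m-a_i|^{2(4-N)/n}$; hence locally near $a_m$ one has $\phi(x)^{2}\sim c_m |x-a_m|^{2(4-N)/n}$. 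Symmetrically, as $|x|\to\infty$ every $|x-a_i|\sim|x|$, so $\phi(x)^{2}\sim|x|^{2(4-N)}$.

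Next I would dispatch the behaviour at infinity uniformly for all seven functions. Each integrand is of the form $\phi(x)^{2}$ divided by a product $\prod|x-a_{\cdot}|^{p_\cdot}$ of total degree $d\in\{4,6,8\}$, so at infinity it behaves like $|x|^{2(4-N)-d}$. The condition $2(4-N)-d<-N$ reads $d>8-N$, and since $N\geq 5$ this is implied by $d\geq 4$, hence holds for all seven integrands. Consequently the infinity end never obstructs integrability, and the entire question reduces to the local behaviour at each pole.

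For the local analysis at a pole $a_m$, the relevant power of $|x-a_m|$ appearing in the denominator is either $2$ or $4$. If it is $2$ (which happens for every occurrence of $a_m$ as an index in $b_{i,j}$, $d_{i,j,k}$, $f_{i,j,k,l}$, and for the non-quartic indices of $c_{i,j}$, $g_{i,j,k}$), the local exponent is $2(4-N)/n-2$; integrability demands $2(4-N)/n-2>-N$, i.e.\ $(n-2)(N-2)+4>0$, which holds for all $n\geq 2$ and $N\geq 5$. If the power is $4$ (i.e.\ $a_m$ appears as the distinguished quartic index of $a_m$, $c_{i,j}$, $e_{i,j}$, or $g_{i,j,k}$), the local exponent is $2(4-N)/n-4$, and integrability demands
\begin{equation*}
\tfrac{2(4-N)}{n}-4>-N\;\Longleftrightarrow\;(n-2)(N-4)>0.
\end{equation*}
Since $N\geq 5$ this is equivalent to $n\geq 3$; in the borderline case $n=2$ the exponent collapses to $(4-N)-4=-N$, which produces a logarithmically divergent integral on any neighbourhood of $a_m$.

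Combining the two ends, every one of $a_i$, $b_{i,j}$, $c_{i,j}$, $d_{i,j,k}$, $e_{i,j}$, $f_{i,j,k,l}$, $g_{i,j,k}$ lies in $L^{1}(\mathbb{R}^{N})$ once $n\geq 3$, while for $n=2$ the four functions containing a $|x-a_m|^{-4}$ factor, namely $a_i$, $c_{i,j}$, $e_{i,j}$, and $g_{i,j,k}$, are non-integrable in any neighbourhood of the corresponding pole. This yields the two assertions of the proposition. There is no real conceptual obstacle here; the only mildly delicate point is recognising that the global proliferation of cross factors in $\phi^{2}$ is harmless because away from a given pole those factors are bounded above and below, so the verification truly reduces to the two scalar inequalities $(n-2)(N-2)+4>0$ and $(n-2)(N-4)>0$ singled out above.
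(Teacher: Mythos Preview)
Your argument is correct and follows the same localization strategy as the paper---split $\mathbb{R}^N$ into neighbourhoods of the poles and of infinity, reduce to a single power of $|x-a_m|$ or $|x|$, and apply the scalar integrability test---with the paper working out only $a_i$ and $e_{i,j}$ explicitly while you handle all seven functions uniformly via the denominator degree. The one case you do not mention (a pole $a_m$ that is not an index of the function, giving local exponent $2(4-N)/n$) is strictly easier than your power-$2$ case, so nothing is missing.
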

\begin{proof}
We only prove the integrability results for $a_{i}(x)$ and $e_{i,j}(x)$, the argument of others is similar. Assume $\rho>0$ small enough such that $\{a_{1},\cdots,a_{n}\}\subset B_{\frac{1}{\rho}}(0)$ and $B_{\rho}(a_{i})\bigcap B_{\rho}(a_{j})=\emptyset$ for any $i\neq j$. Since $a_{i}(x)$ and $e_{i,j}(x)$ are bounded functions in the bounded domain $B_{\frac{1}{\rho}}(0)\setminus\bigcup_{i=1}^{n}B_{\rho}(a_{i})$, we only need to consider the integrability of them in $B_{\frac{1}{\rho}}(0)^{c}$ and $B_{\rho}(a_{i})$ for $i=1,\cdots,n$. Note that $|x-a_{j}|\sim |a_{i}-a_{j}|$ for $x\in B_{\rho}(a_{i})$ when $i\neq j$ and $|x-a_{i}|\sim |x|$ for $x\in B_{\frac{1}{\rho}}(0)^{c}$, the following asymptotical behavior holds:
\begin{enumerate}
\item for $x\in B_{\rho}(a_{i})$,
\begin{equation*}
a_{i}(x)\simeq e_{i,j}(x)\simeq |x-a_{i}|^{\frac{(n-2)(N-4)}{n}-N};
\end{equation*}
\item for $x\in B_{\rho}(a_{j})$,
\begin{equation*}
|x-a_{j}|^{\frac{(n-2)(N-4)}{n}-N+4}\simeq a_{i}(x)\lesssim e_{i,j}(x)\simeq|x-a_{j}|^{\frac{(n-2)(N-4)}{n}-N};
\end{equation*}
\item for $x\in B_{\rho}(a_{k})$, $k\neq i,j$,
\begin{equation*}
a_{i}(x)\simeq e_{i,j}(x)\simeq |x-a_{i}|^{\frac{(n-2)(N-4)}{n}-N+4};
\end{equation*}
\item for $x\in B_{\frac{1}{\rho}}(0)^{c}$,
\begin{equation*}
|x|^{4-2N}\simeq a_{i}(x)\gtrsim e_{i,j}(x)\simeq|x|^{-2N};
\end{equation*}
\end{enumerate}
Therefore, it shows that $a_{i}(x)$, $e_{i,j}(x)\in L^{1}(B_{\frac{1}{\rho}}(0)^{c}\bigcup\cup_{i=1}^{n}B_{\rho}(a_{i}))$  when $n\geq 3$ and $N\geq 5$, whereas $a_{i}(x)$, $e_{i,j}(x)\notin L^{1}(B_{\rho}(a_{i}))$ when $n=2$. Thus, we prove the claim for $a_{i}(x)$, $e_{i,j}(x)$ in Proposition \ref{prop2.1} and the proof for other functions is similar. This complete the proof.
\end{proof}
\emph{Proof of the attainability:}
By (\ref{T-3-1}) and Proposition \ref{prop2.1}, we get the integrability of $|\Delta\phi|^{2}$ when $n\geq 3$, which means $\phi\in D^{2,2}(\mathbb{R}^{N})$. So, by using integration by parts, we get the following identity
\begin{equation}\label{}
\int_{\mathbb{R}^{N}}|\Delta\phi|^{2}dx=\int_{\mathbb{R}^{N}}\frac{\Delta^{2}\phi}{\phi}\phi^{2}dx
=\frac{N^{2}(N-4)^{2}}{n^{4}}\int_{\mathbb{R}^{N}}V_{n}(x)\phi^{2}dx.
\end{equation}
This implies the attainability of $\frac{N^{2}(N-4)^{2}}{n^{4}}$ in $D^{2,2}(\mathbb{R}^{N})$ when $n\geq 3$. When $n=2$, $\phi\notin D^{2,2}(\mathbb{R}^{N})$ since $a_{i}(x)$ is not in $L^{1}(\mathbb{R}^{N})$, which shows that the constant $\frac{N^{2}(N-4)^{2}}{16}$ is not attained in $D^{2,2}(\mathbb{R}^{N})$.

$\hfill\square$

\subsection{Criticality for $n=2$}
We know from Theorem \ref{thm1.3} that $H_n$ is is positive-critical for $n\geq3$, and we study the criticality for $n=2$ in this subsection. For general subcritical second-order elliptic operator, a convenient method to construct a critical schr\"{o}dinger operator is given in \cite{DFP} by constructing the \emph{optimal Hardy-weight}, but we have no such direct approach for  biharmonic schr\"{o}dinger operator. Therefore, we need to find other method to deal with the criticality of nonnegative operators.
\begin{lemma}
 A nonnegative operator $H$ defined in Hilbert space $L^{2}(\mathbb{R}^{N})$ is critical if there exists a function sequence $\{u_{\epsilon}\}_{\epsilon>0}$ and positive a.e. continuous function $\psi(x)$, such that
\begin{equation*}
\lim_{\epsilon\rightarrow 0}\langle Hu_{\epsilon},u_{\epsilon}\rangle=0,\hspace{2mm}and\hspace{2mm}\lim_{\epsilon\rightarrow 0}u_{\epsilon}=\psi(x)\hspace{2mm}a.e.\hspace{2mm}in\hspace{2mm}\mathbb{R}^{N}.
\end{equation*}
\end{lemma}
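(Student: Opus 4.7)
The plan is to argue by contradiction using Fatou's lemma. Assume, for contradiction, that $H$ is subcritical. Then by definition there exists a nonzero nonnegative continuous function $W$ on $\mathbb{R}^{N}$ such that
\begin{equation*}
\langle Hu,u\rangle\geq\int_{\mathbb{R}^{N}}W|u|^{2}dx,\quad\forall u\in C_{0}^{\infty}(\mathbb{R}^{N}),
\end{equation*}
and the same inequality extends to the natural energy space (in our application $D^{2,2}(\mathbb{R}^{N})$) by density, so that the hypothesized sequence $\{u_{\epsilon}\}_{\epsilon>0}$ may be inserted as a test family.

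Applying this inequality to each $u_{\epsilon}$ and letting $\epsilon\to 0$ on the left-hand side yields
\begin{equation*}
0=\lim_{\epsilon\to 0}\langle Hu_{\epsilon},u_{\epsilon}\rangle\geq\limsup_{\epsilon\to 0}\int_{\mathbb{R}^{N}}W|u_{\epsilon}|^{2}dx.
\end{equation*}
On the other hand, since $W\geq 0$ and $u_{\epsilon}\to\psi$ a.e., Fatou's lemma gives
\begin{equation*}
\int_{\mathbb{R}^{N}}W\,\psi^{2}dx=\int_{\mathbb{R}^{N}}W\lim_{\epsilon\to 0}|u_{\epsilon}|^{2}dx\leq\liminf_{\epsilon\to 0}\int_{\mathbb{R}^{N}}W|u_{\epsilon}|^{2}dx=0.
\end{equation*}

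Because $W$ is continuous, nonnegative and nonzero, there is a nonempty open set on which $W>0$; combined with the fact that $\psi>0$ a.e.\ in $\mathbb{R}^{N}$, the product $W\psi^{2}$ is strictly positive on a set of positive Lebesgue measure, forcing $\int_{\mathbb{R}^{N}}W\psi^{2}dx>0$. This contradicts the previous display and proves that $H$ must be critical. The only subtle point I expect is the density/approximation issue that legitimizes testing the subcriticality inequality against the $u_{\epsilon}$ (which in the $n=2$ application of this lemma will be cut-off/regularized versions of $\phi=|x-a_{1}|^{(4-N)/2}|x-a_{2}|^{(4-N)/2}$); this is handled in a standard way by mollifying $u_{\epsilon}$ and truncating, then passing to the limit using the continuity of both sides under $\|\cdot\|_{2,2}$-convergence.
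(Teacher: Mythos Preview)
Your proof is correct and follows essentially the same route as the paper: assume subcriticality, apply the inequality $\langle Hu,u\rangle\geq\int W|u|^{2}$ to the sequence $u_{\epsilon}$, and use Fatou's lemma together with $\psi>0$ a.e.\ to force $\int W\psi^{2}=0$, contradicting $W\not\equiv 0$. The paper's argument is slightly terser (it omits your remarks on density and on the open set where $W>0$), but the core idea is identical.
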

\begin{proof}
We assume that there exists a nonzero nonnegative potential $W$ such that
\begin{equation*}
\langle Hu,u\rangle\geq \int_{\mathbb{R}^{N}}W|u|^{2}dx.
\end{equation*}
Then, from Fatou lemma we have
\begin{equation*}
0\leq\int_{\mathbb{R}^{N}}W|\psi|^{2}dx\leq \liminf_{\epsilon\rightarrow 0}\int_{\mathbb{R}^{N}}W|u_{\epsilon}|^{2}dx\leq\lim_{\epsilon\rightarrow 0}\langle Hu_{\epsilon},u_{\epsilon}\rangle=0,
\end{equation*}
which froces that $W=0$ a.e. in $\mathbb{R}^{N}$. So, the proof is completed by contradiction.
\end{proof}

In view of the Rellich-type identity (\ref{T-2-2}), we have the following result to determine the criticality of biharmonic schr\"{o}dinger operator $H_{2}$.
\begin{corollary}\label{cor3.3}
Let $\phi$ satisfies the assumptions in Lemma \ref{lemma2.1}. The operator $\Delta^{2}-\frac{\Delta^{2}\phi}{\phi}$ is critical if there exists a function sequence $\{v_{\epsilon}\}_{\epsilon>0}$, with $0<v_{\epsilon}\leq 1$, such that
\begin{equation*}
\lim_{\epsilon\rightarrow 0}\left(\int_{\mathbb{R}^{N}}|\phi\Delta v_{\epsilon}+2\nabla\phi\cdot\nabla v_{\epsilon}|^{2}dx-\int_{\mathbb{R}^{N}}\phi\Delta\phi|\nabla v_{\epsilon}|^{2}dx\right)=0,\hspace{2mm}and\hspace{2mm}\lim_{\epsilon\rightarrow 0}v_{\epsilon}=1.
\end{equation*}
\end{corollary}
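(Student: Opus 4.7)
The plan is to apply the abstract criticality lemma stated just above to the test sequence $u_{\varepsilon}:=v_{\varepsilon}\phi$ with limit $\psi=\phi$. First, I would substitute $v=v_{\varepsilon}$ (so that $u=v_{\varepsilon}\phi=u_{\varepsilon}$ in the notation of Section~2) into the Rellich-type identity (\ref{T-2-2}). Since $H=\Delta^{2}-\Delta^{2}\phi/\phi$ and integration by parts gives $\langle \Delta^{2}u_{\varepsilon},u_{\varepsilon}\rangle=\int_{\mathbb{R}^{N}}|\Delta u_{\varepsilon}|^{2}dx$, the identity rewrites as
\begin{equation*}
\langle Hu_{\varepsilon},u_{\varepsilon}\rangle=A_{\varepsilon}-2B_{\varepsilon},
\end{equation*}
where
\begin{equation*}
A_{\varepsilon}:=\int_{\mathbb{R}^{N}}|\phi\Delta v_{\varepsilon}+2\nabla\phi\cdot\nabla v_{\varepsilon}|^{2}dx,\qquad B_{\varepsilon}:=\int_{\mathbb{R}^{N}}\phi\Delta\phi|\nabla v_{\varepsilon}|^{2}dx.
\end{equation*}

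Next, I would use the standing sign conditions inherited from Lemma~\ref{lemma2.1}: $\phi>0$ and $-\Delta\phi\geq 0$ in $\mathbb{R}^{N}$. These yield $A_{\varepsilon}\geq 0$ and $-B_{\varepsilon}\geq 0$. The hypothesis of the corollary is precisely $A_{\varepsilon}+(-B_{\varepsilon})\to 0$, and since both summands are nonnegative they must each vanish in the limit. Consequently
\begin{equation*}
\langle Hu_{\varepsilon},u_{\varepsilon}\rangle=A_{\varepsilon}+2(-B_{\varepsilon})\longrightarrow 0.
\end{equation*}
On the other hand, from $0<v_{\varepsilon}\leq 1$ and $v_{\varepsilon}\to 1$ we get $u_{\varepsilon}=v_{\varepsilon}\phi\to\phi$ pointwise a.e.\ in $\mathbb{R}^{N}$, and $\phi$ is a positive a.e.\ continuous function by construction. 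Applying the preceding abstract lemma with $\psi=\phi$ delivers the criticality of $H=\Delta^{2}-\Delta^{2}\phi/\phi$.

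The main technical delicacy I expect lies not in the chain of estimates above, which is essentially algebraic once (\ref{T-2-2}) is in hand, but in the justification that (\ref{T-2-2}) is applicable to $u_{\varepsilon}=v_{\varepsilon}\phi$: the identity was derived for $u\in C_{0}^{\infty}(\Omega)$, while $\phi$ is singular at the poles $a_{i}$ and we do not pre-impose any vanishing of $v_{\varepsilon}$ there. In practice one would either require each $v_{\varepsilon}\in C_{0}^{\infty}(\mathbb{R}^{N}\setminus\{a_{1},\ldots,a_{n}\})$ so that $u_{\varepsilon}$ is an admissible test function, or perform a cutoff/approximation near each $a_{i}$ and pass to the limit using the integrability bounds for $\phi^{2}/|x-a_{i}|^{k}\cdots$ type quantities already quantified in Proposition~\ref{prop2.1}. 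Either way, the compatibility with the explicit construction of $\{v_{\varepsilon}\}$ in the $n=2$ case (carried out next) is exactly what must be arranged to invoke the corollary.
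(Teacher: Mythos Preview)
Your proposal is correct and is exactly the argument the paper has in mind: the paper does not spell out a separate proof for Corollary~\ref{cor3.3} but simply records it as the specialization of the abstract criticality lemma via the Rellich-type identity~(\ref{T-2-2}), which is precisely the chain $u_{\varepsilon}=v_{\varepsilon}\phi$, $\langle Hu_{\varepsilon},u_{\varepsilon}\rangle=A_{\varepsilon}-2B_{\varepsilon}$, and the sign splitting $A_{\varepsilon}\geq 0$, $-B_{\varepsilon}\geq 0$ you wrote down. Your remark on the applicability of~(\ref{T-2-2}) is also apt; in the paper's application the chosen $v_{\varepsilon}$ vanishes near each pole and outside a large ball, so $u_{\varepsilon}=v_{\varepsilon}\phi$ is compactly supported away from the singularities and the identity applies by approximation.
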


\emph{Proof of Proposition \ref{prop1.6} for $n=2$:} In this case $\phi(x)=|x-a_{1}|^{\frac{4-N}{2}}|x-a_{2}|^{\frac{4-N}{2}}$. Let's select a cut-off function as follows
\begin{equation*}
v_{\epsilon}(x)=\left\{\begin{array}{ll}
0                                                       &, x\in B_{\epsilon^{2}}(a_{i}),i=1,2;\\
\frac{\log{(|x-a_{i}|/\epsilon^{2})}}{\log{1/\epsilon}} &, x\in B_{\epsilon}(a_{i})\setminus B_{\epsilon^{2}}(a_{i}),i=1,2;\\
1                                                       &, x\in B_{\frac{1}{\epsilon}}(0)\setminus\bigcup_{i=1}^{n}B_{\epsilon}(a_{i}),i=1,2;\\
\frac{\log(\epsilon^{2}|x|)}{\log{\epsilon}}            &, x\in B_{\frac{1}{\epsilon^{2}}}(0)\setminus B_{\frac{1}{\epsilon}}(0),i=1,2;\\
0                                                       &, |x|\geq\frac{1}{\epsilon^{2}}.
   \end{array}\right.
\end{equation*}
By direct computation, the gradient and Laplacian of $v_{\epsilon}$ are respectivly
\begin{equation*}
\nabla v_{\epsilon}(x)=\left\{\begin{array}{ll}
\frac{x-a_{i}}{|x-a_{i}|^{2}}\left(\log{\frac{1}{\epsilon}}\right)^{-1} &, x\in B_{\epsilon}(a_{i})\setminus B_{\epsilon^{2}}(a_{i}),i=1,2;\\
\frac{x}{|x|^{2}} \log{\frac{1}{\epsilon}}                             &, x\in B_{\frac{1}{\epsilon^{2}}}(0)\setminus B_{\frac{1}{\epsilon}}(0),i=1,2;\\
0                                                                      &, otherwise.
   \end{array}\right.
\end{equation*}
and
\begin{equation*}
\Delta v_{\epsilon}(x)=\left\{\begin{array}{ll}
\frac{N-2}{|x-a_{i}|^{2}}\left(\log{\frac{1}{\epsilon}}\right)^{-1} &, x\in B_{\epsilon}(a_{i})\setminus B_{\epsilon^{2}}(a_{i}),i=1,2;\\
\frac{N-2}{|x|^{2}}\log{\frac{1}{\epsilon}}                         &, x\in B_{\frac{1}{\epsilon^{2}}}(0)\setminus B_{\frac{1}{\epsilon}}(0),i=1,2;\\
0                                                                   &, otherwise.
   \end{array}\right.
\end{equation*}
On the other hand, we have
\begin{equation*}
\begin{split}
         & \int_{\mathbb{R}^{N}}|\phi\Delta v_{\epsilon}+2\nabla\phi\cdot\nabla v_{\epsilon}|^{2}dx
           -\int_{\mathbb{R}^{N}}\phi\Delta\phi|\nabla v_{\epsilon}|^{2}dx \\
\lesssim & \int_{\mathbb{R}^{N}}|\phi\Delta v_{\epsilon}|^{2}dx
           +\int_{\mathbb{R}^{N}}|\nabla\phi\cdot\nabla v_{\epsilon}|^{2}dx
           +\int_{\mathbb{R}^{N}}|\phi\Delta\phi||\nabla v_{\epsilon}|^{2}dx \\
   =:    & I_{1}+I_{2}+I_{3}.
\end{split}
\end{equation*}
Using spherical coordinate formula, we get
\begin{equation*}
\begin{split}
I_{1}\simeq & \left(\log{\frac{1}{\epsilon}}\right)^{-2}|a_{1}-a_{2}|^{4-N}\sum_{i=1}^{2}\int_{B_{\epsilon}(a_{i})\setminus B_{\epsilon^{2}}(a_{i})}
              |x-a_{i}|^{4-N}\cdot\frac{(N-2)^{2}}{|x-a_{i}|^{4}}dx \\
            & +\left(\log{\frac{1}{\epsilon}}\right)^{2}\int_{B_{\frac{1}{\epsilon^{2}}}(0)\setminus B_{\frac{1}{\epsilon}}(0)}
              |x|^{2(4-N)}\cdot\frac{(N-2)^{2}}{|x|^{4}}dx \\
     \simeq & \left(\log{\frac{1}{\epsilon}}\right)^{-2}\int_{\epsilon^{2}}^{\epsilon}r^{-1}dr
              +\left(\log{\frac{1}{\epsilon}}\right)^{2}\int_{\frac{1}{\epsilon}}^{\frac{1}{\epsilon^{2}}}r^{4-N-1}dr \\
     \simeq & \left(\log{\frac{1}{\epsilon}}\right)^{-1}+\left(\frac{1}{\epsilon}\right)^{4-N}\left(\log{\frac{1}{\epsilon}}\right)^{2}
              \rightarrow 0\hspace{2mm}as\hspace{2mm}\epsilon\rightarrow 0.
\end{split}
\end{equation*}
By similar computation, we have
\begin{equation*}
\begin{split}
I_{2}\simeq & \left(\log{\frac{1}{\epsilon}}\right)^{-2}\sum_{i=1}^{2}\int_{B_{\epsilon}(a_{i})\setminus B_{\epsilon^{2}}(a_{i})}
              |x-a_{i}|^{-N}dx+\left(\log{\frac{1}{\epsilon}}\right)^{2}\int_{B_{\frac{1}{\epsilon^{2}}}(0)\setminus B_{\frac{1}{\epsilon}}(0)}|x|^{4-2N}dx \\
            & \rightarrow 0\hspace{2mm}as\hspace{2mm}\epsilon\rightarrow 0.
\end{split}
\end{equation*}
and
\begin{equation*}
\begin{split}
I_{3}\simeq & \left(\log{\frac{1}{\epsilon}}\right)^{-2}\sum_{i=1}^{2}\int_{B_{\epsilon}(a_{i})\setminus B_{\epsilon^{2}}(a_{i})}
              |x-a_{i}|^{-N}dx+\left(\log{\frac{1}{\epsilon}}\right)^{2}\int_{B_{\frac{1}{\epsilon^{2}}}(0)\setminus B_{\frac{1}{\epsilon}}(0)}|x|^{4-2N}dx \\
            & \rightarrow 0\hspace{2mm}as\hspace{2mm}\epsilon\rightarrow 0.
\end{split}
\end{equation*}
Summing over $I_{1}$, $I_{2}$ and $I_{3}$ yields
\begin{equation*}
\lim_{\epsilon\rightarrow 0}\left(\int_{\mathbb{R}^{N}}|\phi\Delta v_{\epsilon}+2\nabla\phi\cdot\nabla v_{\epsilon}|^{2}dx-\int_{\mathbb{R}^{N}}\phi\Delta\phi|\nabla v_{\epsilon}|^{2}dx\right)=0.
\end{equation*}
Thus the operator $H_{2}$ is critical by Corollary \ref{cor3.3}, moreover, it is null-critical by Corollary \ref{cor1.4}.

$\hfill\square$


\end{document}